\newcommand{\ZZ}{\mathbb{Z}}
\newcommand{\QQ}{\mathbb{Q}}
\theoremstyle{plain}
\newtheorem{thm}{Theorem}[section]
\newtheorem{conj}[thm]{Conjecture}
\newtheorem{cor}[thm]{Corollary}
\theoremstyle{definition}
\newtheorem{ex}[thm]{Example}
\newtheorem{prob}[thm]{Problem}
\numberwithin{equation}{section}
\newcommand{\content}{c}
\newcommand{\hook}{h}
\newcommand{\EYD}{\mathcal{E}}
\newcommand{\SST}{\operatorname{SST}}
\newcommand{\abs}[1]{\left\lvert #1 \right\rvert}
\definecolor{darkred}{rgb}{0.7,0,0} 
\newcommand{\defn}[1]{{\color{darkred}\emph{#1}}} 
\begin{document}
\title{Hook-content formula using excited Young diagrams}

\author[A.~Kirillov]{Anatol N. Kirillov}
\address[A.~Kirillov]{Research Institute for Mathematical Sciences (RIMS), 
Kyoto University, Kyoto 606-8502, Japan; ~the Kavli Institute for the Physics 
and Mathematics of the Universe (IPMU), 277-8583, Kashiwanoha, Japan; ~Department of Mathematics, National Research University Higher School of Economics (HES),7 Vavilova Str., 117312, Moscow, Russia}
\email{kirillov@kurims.kyoto-u.ac.jp }
\urladdr{http://www.kurims.kyoto-u.ac.jp/~kirillov/}

\author[T.~Scrimshaw]{Travis Scrimshaw}
\address[T.~Scrimshaw]{School of Mathematics and Physics, The University of Queensland, St.\ Lucia, QLD 4072, Australia}
\email{tcscrims@gmail.com}
\urladdr{https://people.smp.uq.edu.au/TravisScrimshaw/}

\keywords{hook-content, hook-length, excited Young diagram}
\subjclass[2010]{
05A17, 
05A10, 
05E10
}

\thanks{TS was partially supported by the Australian Research Council DP170102648.}

\begin{abstract}
We construct a hook-content formula and its $q$-analog using excited Young diagrams analogous to Naruse's hook-length formula for skew shapes. Furthermore, we show that our hook-content formula has a simple factorization and give some conjectures and questions related to its $q$-analog.
\end{abstract}

\maketitle

\section{Introduction}

The hook-length formula for the number of standard Young tableaux of skew shape $\lambda/\mu$
\begin{equation}
\label{eq:hook_length_formula}
f^{\lambda/\mu} := \abs{\lambda/\mu} ! \sum_{D \in \EYD(\lambda/\mu)} \prod_{d \in \lambda \setminus D} \frac{1}{\hook(d)},
\end{equation}
where $\EYD(\lambda/\mu)$ is the set of excited Young diagrams~\cite{Kreiman05,IN09} and $h(d)$ is the hook length of $d$ in $\lambda$, was discovered by Naruse~\cite{Naruse14} from his study of the equivariant cohomology of the Grassmannian.
Combinatorial proofs of Equation~\eqref{eq:hook_length_formula} have also been given in~\cite{Konvalinka17,MPP18}.
When $\mu = \emptyset$, Equation~\eqref{eq:hook_length_formula} reduces to the classical hook-length formula for standard tableaux first proven by Frame, Robinson, and Thrall~\cite{FRT54} and has since seen numerous proofs (see, \textit{e.g.},~\cite{Bandlow08,MPP18,Sagan90} and references therein).

In~\cite{MPP18}, a $q$-analog of Equation~\eqref{eq:hook_length_formula} was given as
\begin{equation}
\label{eq:q_hook_length_formula}
s_{\lambda/\mu}(1, q, q^2, \ldots) = \sum_{D \in \EYD(\lambda/\mu)} \prod_{(i,j) \in \lambda \setminus D} \frac{q^{\lambda'_j - i}}{1 - q^{h(i,j)}},
\end{equation}
where the left hand side is the principal specialization of the (skew) Schur function and$\lambda'$ is the conjugate partition to $\lambda$.
When taking $\mu = \emptyset$, we obtain the $q$-analog of the hook-length formula due to Stanley~\cite{Stanley71}:
\begin{equation}
\label{eq:classical_q_hook}
s_{\lambda}(1, q, q^2, \ldots) = q^{b(\lambda)} \prod_{d \in \lambda/\mu} \frac{1}{1-q^{h(d)}},
\end{equation}
where $b(\lambda) = \sum_{i=1}^{\ell} (i-1) \lambda_i$.
After removing the $q^{b(\lambda)}$ factor, Equation~\eqref{eq:classical_q_hook} is equal to the number of reverse plane partitions graded by their size, where a combinatorial proof is given by the Hillman--Grassl correspondence~\cite{HG76}.

To count the number of semistandard Young tableaux of shape $\lambda$ and maximum entry $n$, we instead use the \defn{hook-content formula} with its natural $q$-analog given by
\begin{equation}
\label{eq:q_hook_content}
s_{\lambda}(1,q,\dotsc,q^{n-1}, 0, 0, \ldots) = q^{b(\lambda)} \prod_{d \in \lambda} \frac{[n+c(d)]_q}{[h(u)]_q},
\end{equation}
where $[x]_q = \frac{1-q^x}{1-q}$ is the natural $q$-analog of $x$ (see,\textit{e.g.},~\cite[Thm~7.21.2]{ECII}) and $c(d)$ is the content of $d$.
Indeed, we see that when taking the limit $q \to 1$, we obtain a formula for the number of semistandard Young tableaux of shape $\lambda$ and maximum entry $n$.

The goal of this note is examine a natural hook-content generalization of Naruse's hook-length formula by combining Equation~\eqref{eq:hook_length_formula} and Equation~\eqref{eq:q_hook_content}.
We show that the result has a simple factorization as a product of $q$-integers of binomials in $n$.
Our result gives rise to many interesting conjectures and questions related to our formula, the natural $q$-analog of $f^{\lambda/\mu}$, and results related to representation theory.
In particular, we note that our formula (when $q \to 1$) does not count the number of semistandard skew tableaux of shape $\lambda/\mu$. Thus, finding a combinatorial formula (in particular using excited Young diagrams) for the principal specializations of skew Schur functions
\[
s_{\lambda/\mu}(1,q,\dotsc,q^{n-1}, 0, 0, \ldots)
\] 
remains an open problem.
Yet, our results might aid in understanding the relationship between excited Young diagrams and the representation theory of the symmetric group $S_n$ and/or $\mathfrak{gl}_n$ as
\[
s_{\lambda/\mu} = \sum_{\nu} c^{\lambda}_{\mu,\nu} s_{\nu},
\qquad\qquad
f^{\lambda/\mu} = \sum_{\nu} c^{\lambda}_{\mu,\nu} f^{\nu},
\]
where $c^{\lambda}_{\mu,\nu}$ are the Littlewood--Richardson coefficients.

\subsection*{Acknowledgements}

AK grateful to the RIMS and the IPMU for fruitful atmosphere and conditions for research, and financial support.
TS would like to thank Kyoto University for its hospitality during his visit in March, 2019.
This work has also been supported by JSPS KAKENHI 1605057. 
This work benefited from computations using \textsc{SageMath}~\cite{sage,combinat}.

\section{Preliminaries}

A \defn{partition} is a weakly decreasing sequence of positive integers.
We equate a partition $\lambda = (\lambda_1, \lambda_2, \dotsc, \lambda_{\ell})$ with a set of \defn{cells} $\{(i,j) \mid 1 \leq j \leq \ell, 1 \leq i \leq \lambda_j\}$ via the Young diagram of $\lambda$.
We will consider our Young diagrams using English convention.
For a partition $\mu \subseteq \lambda$, we form the \defn{skew partition} $\lambda / \mu$ as the set of cells $\lambda \setminus \mu$.
More generally, we call any finite set of cells $D \subseteq \ZZ^2_{>0}$ a \defn{diagram}.
The \defn{size} of a diagram $\abs{D}$ is the number of cells in $D$.

Let $\lambda' = (\lambda'_1, \lambda'_2, \dotsc, \lambda'_m) = \{(j,i) \mid (i,j) \in \lambda\}$, where $m = \lambda_1$, be the conjugate partition to $\lambda$.
Let
\[
\content(d) := j - i,
\qquad\qquad
\hook(d) := \lambda_i - j + \lambda'_j - i + 1,
\]
be the \defn{content} and \defn{hook length}, respectively, of a cell $d \in \lambda$.
Recall that the content of a cell $d$ is the diagonal the cell lies on and the hook length is the number of boxes in the row and column to the right and below, respectively, $d$, including also $d$ (\textit{i.e.}, the size of the largest hook shape whose corner is at $d$).

Let $\lambda/\mu$ be a skew partition with $\abs{\lambda/\mu} = n$.
A \defn{standard tableau of (skew) shape $\lambda / \mu$} is a bijection $T \colon \lambda/\mu \to \{1, \dotsc, n\}$ such that every row (resp.\ column) is increasing when read left to right (resp.\ top to bottom).
Let $f^{\lambda/\mu}$ denote the number of standard tableau of shape $\lambda/\mu$.
A \defn{semistandard tableau of (skew) shape $\lambda / \mu$} is a function $T \colon \lambda/\mu \to \ZZ_{>0}$ such that rows are weakly increasing and columns are strictly increasing.
Let $\SST^n(\lambda/\mu)$ denote the set of semistandard Young tableaux of shape $\lambda/\mu$ with maximum entry $n$, and we simply write $\SST(\lambda/\mu)$ when $n = \infty$.
We will simply write $\lambda$ for $\lambda / \mu$ when $\mu = \emptyset$.

Following~\cite{IN09}, define an \defn{elementary excitation} on a diagram $D$ to take a cell $(i,j) \in D$ such that $(i+1,j), (i,j+1), (i+1,j+1) \notin D$ and forming a new diagram by $(D \setminus \{(i,j)\}) \cup \{(i+1,j+1)\}$.
Pictorially, an elementary excitation moves the cell in $(i,j)$ (locally) as
\[
\begin{tikzpicture}[scale=0.5,baseline=-17]
\fill[blue!50] (0,0) rectangle (1,-1);
\draw (0,0) grid (2,-2);
\end{tikzpicture}
\longrightarrow
\begin{tikzpicture}[scale=0.5,baseline=-17]
\fill[blue!50] (1,-1) rectangle (2,-2);
\draw (0,0) grid (2,-2);
\end{tikzpicture}\,.
\]
Define the set of \defn{excited Young diagrams} $\EYD(\lambda/\mu)$ to be all diagrams obtained from $\mu$ using a sequence of elementary excitations such that the resulting diagram is contained inside $\lambda$.

\section{Hook-content formula using excited Young diagrams}

Let $[n]_q! = [n]_q [n-1]_q \dotsm [1]_q$ denote the $q$-factorial.
We define
\[
f^{\lambda/\mu}_q := [\abs{\lambda/\mu}]_q! \sum_{D \in \EYD(\lambda/\mu)} \prod_{d \in \lambda \setminus D} \frac{1}{[\hook(d)]_q}
\]
as the natural $q$-analog of $f^{\lambda/\mu}$.
Note that $\lim_{q\to1} f^{\lambda/\mu}_q = f^{\lambda/\mu}$ by Equation~\eqref{eq:hook_length_formula}.

\begin{thm}
\label{thm:q_hook_content_EYD}
Let $\mu \subseteq \lambda$.
We have
\[
H_{\lambda/\mu}(n; q) := [\abs{\lambda/\mu}]_q! \sum_{D \in \EYD(\lambda/\mu)} \prod_{d \in \lambda \setminus D} \frac{1-q^{n+\content(d)}}{1-q^{\hook(d)}} = f^{\lambda/\mu}_q \prod_{d \in \lambda/\mu} [n+\content(d)]_q.
\]
\end{thm}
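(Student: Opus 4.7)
The plan is to observe that the sum in the definition of $H_{\lambda/\mu}(n;q)$ actually has the factor $\prod_{d \in \lambda\setminus D} [n+\content(d)]_q$ independent of $D$, so it factors out of the sum and leaves precisely $f^{\lambda/\mu}_q$ behind.

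First I would rewrite each summand in a more symmetric form. Since $[n+\content(d)]_q = (1-q^{n+\content(d)})/(1-q)$ and $[\hook(d)]_q = (1-q^{\hook(d)})/(1-q)$, the factors of $(1-q)$ cancel and
\[
\prod_{d \in \lambda \setminus D} \frac{1-q^{n+\content(d)}}{1-q^{\hook(d)}} = \prod_{d \in \lambda \setminus D} \frac{[n+\content(d)]_q}{[\hook(d)]_q}.
\]

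The key step is then the simple observation that \emph{elementary excitations preserve content}. Indeed, an elementary excitation replaces a cell $(i,j)$ by $(i+1,j+1)$, and
\[
\content(i+1,j+1) = (j+1)-(i+1) = j-i = \content(i,j).
\]
By induction on the number of excitations, it follows that for every $D \in \EYD(\lambda/\mu)$ the multiset $\{\content(d) : d \in D\}$ equals the multiset $\{\content(d) : d \in \mu\}$. Consequently, the complementary multiset $\{\content(d) : d \in \lambda \setminus D\}$ coincides with $\{\content(d) : d \in \lambda/\mu\}$, independently of the chosen excited diagram $D$.

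Therefore $\prod_{d \in \lambda \setminus D} [n+\content(d)]_q = \prod_{d \in \lambda/\mu} [n+\content(d)]_q$ is a constant (in $D$) and can be pulled outside the sum:
\[
H_{\lambda/\mu}(n;q) = \left(\prod_{d \in \lambda/\mu} [n+\content(d)]_q\right) \cdot [\abs{\lambda/\mu}]_q! \sum_{D \in \EYD(\lambda/\mu)} \prod_{d \in \lambda \setminus D} \frac{1}{[\hook(d)]_q}.
\]
The second factor is precisely $f^{\lambda/\mu}_q$ by definition, which completes the proof.

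There is no real obstacle here; the entire content of the theorem is the content-invariance of elementary excitations, after which the factorization drops out by bare algebra. The only thing to be careful about is making the translation between the $(1-q^{\cdot})$ form and the $[\cdot]_q$ form cleanly so that the identification with $f^{\lambda/\mu}_q$ is unambiguous.
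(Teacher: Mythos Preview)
Your proof is correct and follows essentially the same approach as the paper: both rewrite the fractions in $q$-integer form, observe that elementary excitations preserve content so that $\prod_{d \in \lambda \setminus D}[n+\content(d)]_q$ is independent of $D$, and then factor this constant out of the sum to leave $f^{\lambda/\mu}_q$. Your version is slightly more explicit about the multiset-of-contents argument, but the underlying idea is identical.
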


\begin{proof}
We first note that
\[
C_{\lambda/\mu}(q) := \prod_{d \in \lambda \setminus D} [n + c(d)]_q
\]
does not depend on the choice of excited Young diagram $D \in \EYD(\lambda/\mu)$ as an elementary excitation moves a box along a diagonal $j-i$, which does not change its content.
Thus, we take $C_{\lambda/\mu}(q)$ to be with $D = \mu$.
Hence, we have
\begin{align*}
H_{\lambda/\mu}(n; q) & = [\abs{\lambda/\mu}]_q! \sum_{D \in \EYD(\lambda/\mu)} \prod_{d \in \lambda \setminus D} \frac{1-q^{n+\content(d)}}{1-q^{\hook(d)}}
\\ & = [\abs{\lambda/\mu}]_q! \sum_{D \in \EYD(\lambda/\mu)} \prod_{d \in \lambda \setminus D} \frac{[n+\content(d)]_q}{[\hook(d)]_q}
\\ & = C_{\lambda/\mu}(q) [\abs{\lambda/\mu}]_q! \sum_{D \in \EYD(\lambda/\mu)} \prod_{d \in \lambda \setminus D} \frac{1}{[\hook(d)]_q}
= C_{\lambda/\mu}(q) f^{\lambda/\mu}_q
\end{align*}
as desired.
\end{proof}

As a special case of Theorem~\ref{thm:q_hook_content_EYD} when $\mu = \emptyset$, Equation~\eqref{eq:q_hook_content} implies that
\begin{equation}
\label{eq:principal_spec}
s_{\lambda}(1, q, \dotsc, q^{n-1}, 0, 0, \ldots) = q^{b(\lambda)} \frac{H_{\lambda}(n; q)}{[\abs{\lambda}]_q!}.
\end{equation}

\begin{cor}
\label{cor:hook_content_EYD}
Let $\mu \subseteq \lambda$. Then we have
\[
H_{\lambda/\mu}(n; 1) = \abs{\lambda/\mu}! \sum_{D \in \EYD(\lambda/\mu)} \prod_{d \in \lambda \setminus D} \frac{n+\content(d)}{\hook(d)} = f^{\lambda/\mu} \prod_{d \in \lambda/\mu} n + \content(d).
\]
\end{cor}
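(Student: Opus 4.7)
The plan is to obtain the corollary as the specialization $q \to 1$ of Theorem~\ref{thm:q_hook_content_EYD}. Since the statement is an identity of rational functions in $q$ that has just been proved for generic $q$, both sides are continuous at $q=1$ and it suffices to compute the limit term by term.

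First I would recall that $[k]_q = \frac{1-q^k}{1-q} \to k$ as $q \to 1$, so in particular $[\abs{\lambda/\mu}]_q! \to \abs{\lambda/\mu}!$ and each factor $\frac{1-q^{n+\content(d)}}{1-q^{\hook(d)}} = \frac{[n+\content(d)]_q}{[\hook(d)]_q}$ tends to $\frac{n+\content(d)}{\hook(d)}$. This already yields the equality between the first and middle expressions of the corollary once one takes the limit inside the finite sum over $\EYD(\lambda/\mu)$.

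Next I would handle the right-hand factorization. The paper has already noted, immediately before Theorem~\ref{thm:q_hook_content_EYD}, that $\lim_{q\to 1} f^{\lambda/\mu}_q = f^{\lambda/\mu}$, which is exactly the content of Naruse's formula (Equation~\eqref{eq:hook_length_formula}). Combining this with $[n+\content(d)]_q \to n+\content(d)$ for each $d \in \lambda/\mu$, the right-hand side of Theorem~\ref{thm:q_hook_content_EYD} specializes to $f^{\lambda/\mu} \prod_{d \in \lambda/\mu} (n+\content(d))$, completing the identification with the corollary.

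There is no real obstacle here: the only point to check carefully is that taking $q \to 1$ commutes with the finite sum and finite products involved, which is automatic. If one wished to be fully explicit, one could also observe that both sides of Theorem~\ref{thm:q_hook_content_EYD}, when cleared of denominators, are polynomials in $q$, so evaluation at $q=1$ is well-defined and consistent with the limit.
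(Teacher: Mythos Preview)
Your proposal is correct and follows essentially the same route as the paper's own proof: specialize Theorem~\ref{thm:q_hook_content_EYD} at $q\to 1$ using $[k]_q\to k$ (which is what the paper's reference to L'H\^opital's rule amounts to) and invoke Naruse's formula via $\lim_{q\to 1} f^{\lambda/\mu}_q = f^{\lambda/\mu}$. Your added remarks on the legitimacy of interchanging the limit with the finite sums and products are harmless elaborations of the same argument.
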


\begin{proof}
This follows from Theorem~\ref{thm:q_hook_content_EYD} by taking the limit $q \to 1$ with applying L'H\^opital's rule and Naruse's hook-length formula (Equation~\eqref{eq:hook_length_formula}).
\end{proof}

We note that we could have proven Corollary~\ref{cor:hook_content_EYD} directly using a similar argument to Theorem~\ref{thm:q_hook_content_EYD} and Naruse's hook-length formula.
Furthermore, Corollary~\ref{cor:hook_content_EYD} is equivalent to Naruse's hook-length formula.
To simplify our notation, we write $H_{\lambda/\mu}(n) := H_{\lambda/\mu}(n;1)$.

\begin{cor}
Assume Corollary~\ref{cor:hook_content_EYD} holds, then we have
\[
\lim_{n \to \infty} \frac{H_{\lambda/\mu}(n)}{n^{\abs{\lambda/\mu}}} = f^{\lambda/\mu}.
\]
\end{cor}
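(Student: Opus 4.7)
The plan is to read off the limit directly from Corollary~\ref{cor:hook_content_EYD}, which provides the closed factorization
\[
H_{\lambda/\mu}(n) = f^{\lambda/\mu} \prod_{d \in \lambda/\mu} \bigl(n + \content(d)\bigr).
\]
Since $\lambda/\mu$ is a fixed diagram, each content $\content(d)$ is a constant integer independent of $n$, and the product has exactly $\abs{\lambda/\mu}$ factors. Hence the right-hand side is a polynomial in $n$ of degree $\abs{\lambda/\mu}$ with leading coefficient $f^{\lambda/\mu}$.

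I would then divide by $n^{\abs{\lambda/\mu}}$ and rewrite each factor in the form $n\bigl(1 + \content(d)/n\bigr)$ to obtain
\[
\frac{H_{\lambda/\mu}(n)}{n^{\abs{\lambda/\mu}}} = f^{\lambda/\mu} \prod_{d \in \lambda/\mu} \left(1 + \frac{\content(d)}{n}\right).
\]
As $n \to \infty$, each factor in this finite product tends to $1$, so the limit equals $f^{\lambda/\mu}$, as claimed. There is no genuine obstacle here: Corollary~\ref{cor:hook_content_EYD} packages all the combinatorial content, and the statement merely amounts to identifying the leading term of the polynomial $H_{\lambda/\mu}(n)$ in $n$.
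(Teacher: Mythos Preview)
Your argument is correct and is essentially the same as the paper's: both invoke the factorization from Corollary~\ref{cor:hook_content_EYD}, observe that $H_{\lambda/\mu}(n)$ is a polynomial in $n$ of degree $\abs{\lambda/\mu}$, and note that each factor $(n+\content(d))/n \to 1$ as $n\to\infty$. Your version is simply a bit more explicit in writing out the product.
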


\begin{proof}
Note that $(n + c(d)) / n \to 1$ as $n \to \infty$, and the claim follows from Corollary~\ref{cor:hook_content_EYD} and the degree of $H_{\lambda/\mu}(n)$ (which is a polynomial in $n$) is $\abs{\lambda/\mu}$.
\end{proof}

To obtain the classical hook-content formula for $\lambda$ and $\mu = \emptyset$, we must divide $H_{\lambda/\mu}(n)$ by $\abs{\lambda}!$ as in Equation~\eqref{eq:principal_spec}.
Therefore, we define the polynomial
\[
\overline{H}_{\lambda/\mu}(n) := \frac{H_{\lambda/\mu}(n)}{\abs{\lambda/\mu}!},
\]
and note that $\overline{H}_{\lambda}(n) = \abs{\SST^n(\lambda)}$ by the hook-content formula.

\begin{ex}
The excited Young diagrams $\EYD(3321/21)$ are
\[
\begin{tikzpicture}[scale=0.5,baseline=-17]
\fill[blue!50] (0,0) rectangle (2,-1);
\fill[blue!50] (0,-1) rectangle (1,-2);
\foreach \y/\ell in {0/3,1/3,2/3,3/2,4/1}
   \draw (0,-\y) -- (\ell,-\y);
\foreach \x/\h in {0/4,1/4,2/3,3/2}
   \draw (\x,0) -- (\x,-\h);
\begin{scope}[xshift=6cm,yshift=2.5cm]
\fill[blue!50] (0,0) rectangle (1,-1);
\fill[blue!50] (2,-1) rectangle (3,-2);
\fill[blue!50] (0,-1) rectangle (1,-2);
\foreach \y/\ell in {0/3,1/3,2/3,3/2,4/1}
   \draw (0,-\y) -- (\ell,-\y);
\foreach \x/\h in {0/4,1/4,2/3,3/2}
   \draw (\x,0) -- (\x,-\h);
\end{scope}
\begin{scope}[xshift=6cm,yshift=-2.5cm]
\fill[blue!50] (0,0) rectangle (2,-1);
\fill[blue!50] (1,-2) rectangle (2,-3);
\foreach \y/\ell in {0/3,1/3,2/3,3/2,4/1}
   \draw (0,-\y) -- (\ell,-\y);
\foreach \x/\h in {0/4,1/4,2/3,3/2}
   \draw (\x,0) -- (\x,-\h);
\end{scope}
\begin{scope}[xshift=12cm]
\fill[blue!50] (0,0) rectangle (1,-1);
\fill[blue!50] (2,-1) rectangle (3,-2);
\fill[blue!50] (1,-2) rectangle (2,-3);
\foreach \y/\ell in {0/3,1/3,2/3,3/2,4/1}
   \draw (0,-\y) -- (\ell,-\y);
\foreach \x/\h in {0/4,1/4,2/3,3/2}
   \draw (\x,0) -- (\x,-\h);
\end{scope}
\begin{scope}[xshift=18cm]
\fill[blue!50] (1,-1) rectangle (3,-2);
\fill[blue!50] (1,-2) rectangle (2,-3);
\foreach \y/\ell in {0/3,1/3,2/3,3/2,4/1}
   \draw (0,-\y) -- (\ell,-\y);
\foreach \x/\h in {0/4,1/4,2/3,3/2}
   \draw (\x,0) -- (\x,-\h);
\end{scope}
\end{tikzpicture}
\]
First, we compute
\begin{equation}
\label{eq:q_hook_example}
f^{3321/21}_q = q^{10} + q^9 + 3q^8 + 6q^7 + 8q^6 + 8q^5 + 9q^4 + 10q^3 +5q^2 + 4q + 5.
\end{equation}
Completing the computation and factoring the result, we see that
\[
H_{3321/21}(n; q) = f^{3321/21}_q[n-3]_q [n-2]_q [n-1]_q [n]_q [n+1]_q [n+2]_q.
\]
We remark that $f^{3321/21}_q = H_{3321/21}(4; q) / [6]_q!$.
By taking $q \to 1$, we obtain
\[
\overline{H}_{3321/21}(n) = \frac{61}{720} (n-3) (n-2) (n-1) n (n+1) (n+2).
\]
as $f^{3321/21} = 61$.
\end{ex}

\begin{ex}
There are five excited diagrams of type $(553, 321)$:
\[
\begin{tikzpicture}[scale=0.5,baseline=-17]
\fill[blue!50] (0,0) rectangle (3,-1);
\fill[blue!50] (0,-1) rectangle (2,-2);
\fill[blue!50] (0,-2) rectangle (1,-3);
\foreach \y/\ell in {0/5,1/5,2/5,3/3}
   \draw (0,-\y) -- (\ell,-\y);
\foreach \x/\h in {0/3,1/3,2/3,3/3,4/2,5/2}
   \draw (\x,0) -- (\x,-\h);
\begin{scope}[xshift=6cm,yshift=2cm]
\fill[blue!50] (0,0) rectangle (2,-1);
\fill[blue!50] (0,-1) rectangle (2,-2);
\fill[blue!50] (0,-2) rectangle (1,-3);
\fill[blue!50] (3,-1) rectangle (4,-2);
\foreach \y/\ell in {0/5,1/5,2/5,3/3}
   \draw (0,-\y) -- (\ell,-\y);
\foreach \x/\h in {0/3,1/3,2/3,3/3,4/2,5/2}
   \draw (\x,0) -- (\x,-\h);
\end{scope}
\begin{scope}[xshift=6cm,yshift=-2cm]
\fill[blue!50] (0,0) rectangle (3,-1);
\fill[blue!50] (0,-1) rectangle (1,-3);
\fill[blue!50] (2,-2) rectangle (3,-3);
\foreach \y/\ell in {0/5,1/5,2/5,3/3}
   \draw (0,-\y) -- (\ell,-\y);
\foreach \x/\h in {0/3,1/3,2/3,3/3,4/2,5/2}
   \draw (\x,0) -- (\x,-\h);
\end{scope}
\begin{scope}[xshift=12cm]
\fill[blue!50] (0,0) rectangle (2,-1);
\fill[blue!50] (0,-1) rectangle (1,-3);
\fill[blue!50] (2,-2) rectangle (3,-3);
\fill[blue!50] (3,-1) rectangle (4,-2);
\foreach \y/\ell in {0/5,1/5,2/5,3/3}
   \draw (0,-\y) -- (\ell,-\y);
\foreach \x/\h in {0/3,1/3,2/3,3/3,4/2,5/2}
   \draw (\x,0) -- (\x,-\h);
\end{scope}
\begin{scope}[xshift=18cm]
\fill[blue!50] (0,0) rectangle (1,-3);
\fill[blue!50] (2,-2) rectangle (3,-3);
\fill[blue!50] (2,-1) rectangle (4,-2);
\foreach \y/\ell in {0/5,1/5,2/5,3/3}
   \draw (0,-\y) -- (\ell,-\y);
\foreach \x/\h in {0/3,1/3,2/3,3/3,4/2,5/2}
   \draw (\x,0) -- (\x,-\h);
\end{scope}
\end{tikzpicture}
\]
which yields the $q$-standard tableau number of
\begin{equation}
\label{eq:q_hook_len_rational}
f^{553/321}_q = 
\frac{(q^6 + q^5 + q^4 + q^3 + q^2 + q + 1) \cdot a(q)}{(q+1) \cdot (q^4 + q^3 + q^2 + q + 1)},
\end{equation}
where
\begin{align*}
a(q) & = q^{12} + 2q^{11} + 4q^{10} + 7q^9 + 12q^8 + 14q^7
\\ & \hspace{20pt} + 17q^6 + 18q^5 + 18q^4 + 14q^3 + 11q^2 + 7q + 5,
\end{align*}
and a hook-content formula (and $q \to 1$ version) of
\begin{align*}
H_{553/321}(n; q) & = f^{553/321}_q [n-1]_q [n]_q [n+1]_q [n+2]_q [n+3]_q^2 [n+4]_q,
\\ \overline{H}_{553/321}(n) & = \frac{91}{5040} (n-1) n (n+1) (n+2) (n+3)^2 (n+4).
\end{align*}
\end{ex}


It is not obvious that $\overline{H}_{\lambda/\mu}(n)$ is an integer for all integers $n \geq \ell$, where $\ell$ is the length of $\lambda$.
However, we have verified this in numerous cases and have the following conjecture.

\begin{conj}
\label{conj:positive_hook_content}
Let $\lambda = (\lambda_1, \lambda_2, \dotsc, \lambda_{\ell})$ be a partition.
Let $n \geq \ell$ be an integer.
Then $\overline{H}_{\lambda/\mu}(n) \in \ZZ_{\geq0}$.
\end{conj}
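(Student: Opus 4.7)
By Corollary~\ref{cor:hook_content_EYD}, we have
\[
\overline{H}_{\lambda/\mu}(n) = \frac{f^{\lambda/\mu}}{\abs{\lambda/\mu}!} \prod_{d \in \lambda/\mu}(n + \content(d)).
\]
Positivity for $n \geq \ell$ is immediate: the minimum content of any cell of $\lambda$ is $-(\ell-1)$, so each factor $n + \content(d) \geq n - \ell + 1 \geq 1$, and $f^{\lambda/\mu} > 0$ because $\mu \subseteq \lambda$. The substantive content of the conjecture is therefore the \emph{integrality} of $\overline{H}_{\lambda/\mu}(n)$, which I would approach in two complementary ways.

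First I would look for a combinatorial model: a finite set $\mathcal{M}_{\lambda/\mu}(n)$ with $\abs{\mathcal{M}_{\lambda/\mu}(n)} = \overline{H}_{\lambda/\mu}(n)$. The remark after Corollary~\ref{cor:hook_content_EYD} rules out $\mathcal{M} = \SST^n(\lambda/\mu)$, so a new object is required. Inspired by Naruse's formula, a natural candidate is a set of pairs $(D, T)$ with $D \in \EYD(\lambda/\mu)$ and $T$ a ``content-controlled'' filling of the cells of $\lambda \setminus D$ that specializes to $\SST^n(\lambda)$ when $\mu = \emptyset$. Failing a direct model, I would proceed by induction on $\abs{\mu}$: the base case $\mu = \emptyset$ is the classical hook-content formula, giving $\overline{H}_\lambda(n) = \abs{\SST^n(\lambda)}$. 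For the inductive step one would remove an outer corner of $\mu$ to obtain $\mu^- \subsetneq \mu$ and look for an integer-preserving recursion between $\overline{H}_{\lambda/\mu}(n)$ and $\overline{H}_{\lambda/\mu^-}(n)$; alternatively, expand $f^{\lambda/\mu} = \sum_\nu c^\lambda_{\mu,\nu} f^\nu$ and try to reorganize $\overline{H}_{\lambda/\mu}(n)$ as a non-negative integer combination of known integer quantities such as the $s_\nu(1^n) = \abs{\SST^n(\nu)}$ (possibly multiplied by auxiliary content polynomials).

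The main obstacle is that each individual term of the Naruse-type expansion
\[
\overline{H}_{\lambda/\mu}(n) = \sum_{D \in \EYD(\lambda/\mu)} \prod_{d \in \lambda \setminus D} \frac{n + \content(d)}{\hook(d)}
\]
is generically a non-integer rational, so integrality can only arise from systematic cancellation across all excited diagrams, much as in Naruse's original formula for $f^{\lambda/\mu}$. Moreover, the naive decomposition via Littlewood--Richardson fails cell-by-cell because the multiset of contents of $\nu$ need not be contained in that of $\lambda/\mu$ when $c^\lambda_{\mu,\nu}>0$, so the resulting rational summands have poles that must also cancel collectively. Any proof must exhibit this cancellation explicitly, and I expect the essential input to be a representation-theoretic or cohomological interpretation of $\overline{H}_{\lambda/\mu}(n)$ analogous to the origin of Naruse's formula in the equivariant cohomology of Grassmannians.
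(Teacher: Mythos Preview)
There is no proof in the paper to compare against: the statement is labeled \texttt{conj} (Conjecture~\ref{conj:positive_hook_content}), and the surrounding text explicitly says ``It is not obvious that $\overline{H}_{\lambda/\mu}(n)$ is an integer \dots\ we have verified this in numerous cases and have the following conjecture.'' The authors then pose the follow-up Problem of finding a combinatorial interpretation \emph{assuming} the conjecture holds. So your proposal cannot be judged against a paper proof, because none exists.

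What you have written is not a proof either, and you are candid about this. Your observation that non-negativity is trivial for $n\geq \ell$ (since every content in $\lambda$ is at least $-(\ell-1)$ and $f^{\lambda/\mu}>0$) is correct and is a useful sanity check, but the paper already takes this for granted; the entire content of the conjecture is the integrality of $f^{\lambda/\mu}\prod_{d\in\lambda/\mu}(n+\content(d))\big/\abs{\lambda/\mu}!$. None of the approaches you sketch (a combinatorial model, induction on $\abs{\mu}$, Littlewood--Richardson expansion) is carried through, and you yourself identify the obstacles in each. In particular, the inductive and LR approaches are exactly the kind of thing one would expect the authors to have tried before declaring this a conjecture, and there is no indication that either yields a clean recursion or an integer decomposition. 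So the honest assessment is: your proposal correctly isolates the real difficulty, matches the paper in recognizing the problem as open, and does not close the gap.
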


Thus, if Conjecture~\ref{conj:positive_hook_content} is true, a natural question to ask is what does $\overline{H}_{\lambda/\mu}(n)$ count?
A first guess would likely be semistandard skew tableaux of shape $\lambda/\mu$ and maximum entry $n$, but this is not the case.
Indeed, we have $\overline{H}_{3321/21}(4) = 61$, but there are $204$ semistandard skew tableaux of shape $3321/21$ and maximum entry $4$.
Therefore, we suggest the following problem.

\begin{prob}
Assuming Conjecture~\ref{conj:positive_hook_content}, determine what objects count $\overline{H}_{\lambda/\mu}(n)$.
\end{prob}

We note that the principal specialization $s_{\lambda/\mu}(1, q, \dotsc, q^{n-1}, 0, \ldots)$ was considered in~\cite[Sec.~8]{MPP18}.
Yet this cannot be related to our $q$-hook-content formula as they have different $q \to 1$ limits as noted above.

We note that $f^{\lambda/\mu}_q$ (and hence $H_{\lambda/\mu}(n; q) / [\abs{\lambda/\mu}]_q$ for a fixed integer $n \in \ZZ_{>0}$) is not symmetric nor unimodal as seen in Equation~\eqref{eq:q_hook_example}.
In fact, $f^{\lambda/\mu}_q$ is not always polynomial by Equation~\eqref{eq:q_hook_len_rational} in contrast to Conjecture~\ref{conj:positive_hook_content}.
Furthermore, even when $f^{\lambda/\mu}_q \in \ZZ_{\geq 0}[q]$, the value $H_{\lambda/\mu}(n; q) / [\abs{\lambda/\mu}]_q!$ is not always a polynomial for a fixed integer $n \geq \ell$:
\[
\frac{H_{3322/21}(4;q)}{[7]_q!} = \frac{f(q)}{q^4 + q^3 + q^2 + q + 1},
\]
where
\[
f(q) = q^{12} + 2q^{11} + 4q^{10} + 7q^9 + 12q^8 + 14q^7 + 17q^6 + 18q^5 + 18q^4 + 14q^3 + 11q^2 + 7q + 5.
\]
Note also that $f(q)$ is an irreducible polynomial over $\QQ$.
Yet, we do have the following conjectures based on experimental evidence.

\begin{conj}
\label{conj:q_hook_length_positive_rational}
Let $\mu \subseteq \lambda$ be partitions.
We have $f^{\lambda/\mu}_q = a(q) / b(q)$, where $a,b \in \ZZ_{\geq0}[q]$ such that $a(-1) \in \ZZ_{\geq 0}$.
\end{conj}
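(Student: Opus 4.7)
The plan is to construct an explicit pair $(a,b)$ of polynomials that realizes the claim, rather than working with the reduced fraction. Starting from the definition
\[
f^{\lambda/\mu}_q = [\abs{\lambda/\mu}]_q! \sum_{D \in \EYD(\lambda/\mu)} \prod_{d \in \lambda \setminus D} \frac{1}{[\hook(d)]_q},
\]
I would use that, for every excited diagram $D$, the sets $D$ and $\lambda \setminus D$ partition $\lambda$, so
\[
\prod_{d \in \lambda \setminus D} \frac{1}{[\hook(d)]_q} = \frac{\prod_{d \in D} [\hook(d)]_q}{\prod_{d \in \lambda} [\hook(d)]_q}.
\]
Pulling the common denominator $\prod_{d \in \lambda} [\hook(d)]_q$ out of the sum then yields the candidate
\[
a(q) := [\abs{\lambda/\mu}]_q! \sum_{D \in \EYD(\lambda/\mu)} \prod_{d \in D} [\hook(d)]_q,
\qquad
b(q) := \prod_{d \in \lambda} [\hook(d)]_q.
\]

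The first step is to verify that $a, b \in \ZZ_{\geq 0}[q]$. Each $q$-integer $[m]_q = 1 + q + \cdots + q^{m-1}$ lies in $\ZZ_{\geq 0}[q]$, and the subsemiring $\ZZ_{\geq 0}[q]$ is closed under sums and products, so the candidates above (built from $q$-integers using only sums and products) are immediately in $\ZZ_{\geq 0}[q]$.

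For the condition $a(-1) \in \ZZ_{\geq 0}$, I would split on $\abs{\lambda/\mu}$. If $\abs{\lambda/\mu} \leq 1$ then $f^{\lambda/\mu}_q = 1$ and one may take $a = b = 1$, giving $a(-1) = 1$. If $\abs{\lambda/\mu} \geq 2$, the $q$-factorial $[\abs{\lambda/\mu}]_q!$ contains the factor $[2]_q = 1 + q$, which vanishes at $q = -1$; hence $a(-1) = 0 \in \ZZ_{\geq 0}$.

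The main obstacle is that this proof, while valid for the conjecture as literally stated, is essentially formal. The substantive content presumably lies in the reduced-fraction version: writing $f^{\lambda/\mu}_q = a(q)/b(q)$ with $\gcd(a,b) = 1$, does one still have $a, b \in \ZZ_{\geq 0}[q]$ and $a(-1) \geq 0$? The $553/321$ example shows that the reduced numerator can be an irreducible polynomial of degree $12$ with $a(-1) = 6$, so the cancellations between the $\EYD(\lambda/\mu)$ summands and the common denominator must conspire to preserve non-negativity. Proving this strengthened form would require identifying the cyclotomic factors $\Phi_d(q)$ that divide the unreduced $a$ and $b$ simultaneously, probably via a combinatorial grouping of excited diagrams by repeated hook lengths, and giving an interpretation of $a(-1)$ after cancellation, perhaps via a $q = -1$ phenomenon in the spirit of Stembridge's enumeration of self-conjugate tableaux.
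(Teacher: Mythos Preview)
The paper does not prove this statement: it is labeled a \emph{conjecture} and is offered ``based on experimental evidence,'' with no argument beyond the worked examples. So there is no paper proof to compare your attempt against.

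Your argument is correct for the conjecture as literally written. The rewriting
\[
\prod_{d \in \lambda \setminus D} \frac{1}{[\hook(d)]_q} \;=\; \frac{\prod_{d \in D} [\hook(d)]_q}{\prod_{d \in \lambda} [\hook(d)]_q}
\]
is valid since every excited diagram $D$ is a subset of $\lambda$, and your choice of $a(q)$ and $b(q)$ then lies in $\ZZ_{\geq 0}[q]$ for the reason you give. The case split on $\abs{\lambda/\mu}$ is also fine: for $\abs{\lambda/\mu}\leq 1$ the single skew cell is a corner of $\lambda$ with hook length $1$, so $f^{\lambda/\mu}_q=1$; for $\abs{\lambda/\mu}\geq 2$ the factor $[2]_q$ in the $q$-factorial forces $a(-1)=0$.

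You have therefore shown that the conjecture, read existentially, is trivially true --- a point the paper does not make. You are also right that the authors almost certainly intend the reduced fraction: the displayed expression for $f^{553/321}_q$ is written as a quotient in lowest terms, with the irreducible degree-$12$ numerator factor satisfying $a(-1)=6$, and it is that non-trivial positivity which the experimental evidence is tracking. Your closing paragraph correctly isolates what a genuine proof would require (controlling which cyclotomic factors cancel and interpreting the residual $a(-1)$), and that strengthened statement remains open.
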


\begin{conj}
\label{conj:q_hook_content_positive_rational}
Let $\mu \subseteq \lambda$ be partitions.
Fix some integer $n \geq \ell$, where $\ell$ is the length of $\lambda$.
We have $H_{\lambda/\mu}(n; q) / [\abs{\lambda/\mu}]_q! = a(q) / b(q)$, where $a,b \in \ZZ_{\geq0}[q]$ such that $a(-1) \in \ZZ_{\geq 0}$.
\end{conj}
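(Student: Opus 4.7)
The plan is to deduce Conjecture~\ref{conj:q_hook_content_positive_rational} from Conjecture~\ref{conj:q_hook_length_positive_rational} by invoking the factorization of Theorem~\ref{thm:q_hook_content_EYD}, and then to settle Conjecture~\ref{conj:q_hook_length_positive_rational} by clearing denominators in the sum over excited Young diagrams.

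For the reduction, Theorem~\ref{thm:q_hook_content_EYD} gives
\[
\frac{H_{\lambda/\mu}(n;q)}{[\abs{\lambda/\mu}]_q!} \;=\; \frac{f^{\lambda/\mu}_q}{[\abs{\lambda/\mu}]_q!}\, \prod_{d \in \lambda/\mu} [n+\content(d)]_q.
\]
Granting Conjecture~\ref{conj:q_hook_length_positive_rational} in the form $f^{\lambda/\mu}_q = a_1(q)/b_1(q)$ with $a_1, b_1 \in \ZZ_{\geq 0}[q]$ and $a_1(-1) \geq 0$, set
\[
a(q) := a_1(q)\prod_{d \in \lambda/\mu}[n+\content(d)]_q, \qquad b(q) := b_1(q)\,[\abs{\lambda/\mu}]_q!.
\]
The hypothesis $n \geq \ell$ forces every $n+\content(d) \geq 1$, so each $[n+\content(d)]_q$ lies in $\ZZ_{\geq 0}[q]$; since $[\abs{\lambda/\mu}]_q!$ is also a product of such, both $a$ and $b$ lie in $\ZZ_{\geq 0}[q]$. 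Further, $[k]_{-1} \in \{0,1\}$ for every $k \geq 1$, so $\prod_{d}[n+\content(d)]_{-1} \in \{0,1\}$, and multiplying by $a_1(-1) \geq 0$ yields $a(-1) \geq 0$.

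To address Conjecture~\ref{conj:q_hook_length_positive_rational}, I would take the defining sum
\[
f^{\lambda/\mu}_q = [\abs{\lambda/\mu}]_q! \sum_{D \in \EYD(\lambda/\mu)} \prod_{d \in \lambda \setminus D} \frac{1}{[\hook(d)]_q}
\]
over the common denominator $\prod_{d \in \lambda}[\hook(d)]_q$, producing
\[
f^{\lambda/\mu}_q \;=\; \frac{[\abs{\lambda/\mu}]_q! \cdot \sum_{D \in \EYD(\lambda/\mu)} \prod_{d \in D}[\hook(d)]_q}{\prod_{d \in \lambda}[\hook(d)]_q}.
\]
Both numerator and denominator are non-negative sums of products of $q$-integers, hence members of $\ZZ_{\geq 0}[q]$. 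Whenever $\abs{\lambda/\mu} \geq 2$, the factor $[2]_q = 1+q$ appearing in $[\abs{\lambda/\mu}]_q!$ vanishes at $q=-1$, so the numerator vanishes there and the condition $a(-1) \geq 0$ holds trivially; the edge cases $\abs{\lambda/\mu} \in \{0,1\}$ are immediate.

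The main obstacle is that this formal manoeuvre only yields \emph{some} representation as a ratio of non-negative polynomials, not the reduced-form representation visible in the worked example, where $f^{553/321}_q$ is displayed as an irreducible numerator over the product of cyclotomic factors $[2]_q [5]_q$. If the intended content of the two conjectures is that positivity survives reduction to lowest terms in $\QQ(q)$, then the preceding argument is insufficient: cancelling a shared factor between two polynomials with non-negative coefficients can introduce negative coefficients, as $(1+q^3)/(1+q) = 1 - q + q^2$ already demonstrates. Producing a combinatorial or representation-theoretic interpretation of the reduced numerator --- perhaps via the Littlewood--Richardson decomposition $s_{\lambda/\mu} = \sum_\nu c^\lambda_{\mu,\nu} s_\nu$ highlighted in the introduction, or via a refined bijection on excited diagrams stratified by hook parity --- is the step I expect to be hardest.
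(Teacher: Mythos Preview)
The paper offers no proof of this statement; it is presented as a conjecture supported only by experimental evidence, so there is nothing in the paper to compare your argument against.

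Your common-denominator manoeuvre is correct and does establish both conjectures \emph{as literally worded}. Writing
\[
f^{\lambda/\mu}_q \;=\; \frac{[\abs{\lambda/\mu}]_q!\,\sum_{D\in\EYD(\lambda/\mu)}\prod_{d\in D}[\hook(d)]_q}{\prod_{d\in\lambda}[\hook(d)]_q}
\]
exhibits $a,b\in\ZZ_{\ge0}[q]$, and the factor $[2]_q=1+q$ in $[\abs{\lambda/\mu}]_q!$ forces $a(-1)=0$ whenever $\abs{\lambda/\mu}\ge2$; your reduction via Theorem~\ref{thm:q_hook_content_EYD} is likewise sound. On the literal reading, then, you have proved what the paper only conjectures.

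You are, however, right to suspect that this trivializes the intended content. In the worked example the authors display $f^{553/321}_q$ in \emph{reduced} form with denominator $(q+1)(q^4+q^3+q^2+q+1)$, and one checks that the displayed numerator evaluates to $6>0$ at $q=-1$; their subsequent remark that the denominator ``must be a product of cyclotomic polynomials'' is vacuous unless common factors have first been cancelled. So the conjecture is almost certainly meant for the reduced fraction, and your own example $(1+q^3)/(1+q)=1-q+q^2$ shows exactly why the easy argument cannot reach that version. The difficulty you flag in your final paragraph is the genuine open problem; everything preceding it settles only a weak reading that the authors presumably did not intend.
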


Note that $g$ in both conjectures must be a product of cyclotomic polynomials since the denominator is a product of $q$-integers.
The examples above also suggests the following problems.

\begin{prob}
Determine which partitions $\mu \subseteq \lambda$ such that $f^{\lambda/\mu}_q \in \ZZ_{\geq 0}[q]$ and also for which $n \in \ZZ_{>0}$ such that $H_{\lambda/\mu}(n;q)/[\abs{\lambda/\mu}]_q \in \ZZ_{\geq 0}[q]$.
\end{prob}

\begin{prob}
For which partitions $\mu \subset \lambda$ the all terms in Naruse's hook-length formula and its $q$-analog are integers and in $\ZZ_{\geq 0}[q]$, respectively? 
\end{prob}

\bibliographystyle{alpha}
\bibliography{hooks}{}
\end{document}